 \newcommand{\eps}{\varepsilon}
 \newcommand{\p}{\mathbb{P}}
 \newcommand{\E}{\mathbb{E}}
 \newcommand{\F}{\mathcal{F}}
 \newcommand{\Real}{\mathbb{R}}
\newtheorem{theorem}{Theorem}[section]
\newtheorem{corollary}{Corollary}[section]
\newtheorem{proposition}{Proposition}[section]
\theoremstyle{definition}
\newtheorem{remark}{Remark}[section]
\newcounter{hypoconbis}
\newcounter{saveconbis}
\newcommand\debutA{\begin{list} {\textbf{A\arabic{hypoconbis}}}{\usecounter{hypoconbis}}\setcounter{hypoconbis}{\value{saveconbis}}}
\newcommand\finA{\end{list}\setcounter{saveconbis}{\value{hypoconbis}}}
\newcounter{hypocom}
\newcounter{savecom}
\newcommand{\debutB}{\begin{list}{\textbf{B\arabic{hypocom}}}{\usecounter{hypocom}}\setcounter{hypocom}{\value{savecom}}}
\newcommand{\finB}{\end{list}\setcounter{savecom}{\value{hypocom}}}
\newcounter{saveeqn}
\newlength{\defaultheadheight}
\newlength{\defaultheadsep}
\begin{document}

\begin{frontmatter}
\title{A strong law of large numbers for martingale arrays}
\runtitle{SLLN for martingales arrays}

\vspace{1cm}

{\small {\em Yves F. Atchad\'e\footnote{Department of Statistics, University of Michigan, email:
yvesa@umich.edu}}\\[5pt]
(March 2009)} \\[10pt]

\begin{abstract}
We prove a martingale triangular array generalization of the Chow-Birnbaum-Marshall's inequality. The result is used to derive a strong law of large numbers for martingale triangular arrays whose rows are asymptotically stable in a certain sense. To illustrate, we derive a simple proof, based on martingale arguments, of the consistency of kernel regression with dependent data. Another application can be found in \cite{atchadeetfort08} where the new inequality is used to prove a strong law of large numbers for adaptive Markov Chain Monte Carlo methods.
\end{abstract}

\begin{keyword}[class=AMS]
\kwd[Primary ]{60J27, 60J35, 65C40}
\end{keyword}

\begin{keyword}
\kwd{Martingales and Martingale arrays}
\kwd{Strong law of large numbers}
\kwd{Kernel regression}
\end{keyword}

\end{frontmatter}

\section{Strong law of large numbers for martingale arrays}\label{intro}
Let $(\Omega,\F,\p)$ be a probability space and $\E$ the expectation operator with respect to $\p$. Let $\{D_{n,i},\F_{n,i},\;1\leq i\leq n\}$, $n\geq 1$ be a martingale-difference array. That is for each $n\geq 1$, $\{\F_{n,i},\;1\leq i\leq n\}$ is a non-decreasing sequence of sub-sigma-algebra of $\F$, for any $1\leq i\leq n$, $\E\left(|D_{n,i}|\right)<\infty$ and $\E\left(D_{n,i}|\F_{n,i-1}\right)=0$. We assume throughout the paper that $\F_{n,0}=\{\emptyset,\Omega\}$ for all $n\geq 0$. We introduce the partial sums
\[M_{n,k}:= \sum_{i=1}^k D_{n,i},\;\;1\leq k\leq n,\;n\geq 1.\]
For each $n\geq 1$, $\{(M_{n,k},\F_{n,k}),\;1\leq k\leq n\}$ is a martingale. Let $\{c_n,\;n\geq 1\}$ be a non-increasing sequence of positive numbers. We are interested in conditions under which $c_nM_{n,n}$ converges almost surely to zero.

Martingales and martingale arrays play an important role in Probability and Statistics as valuable tools for limit theory. Much is known on the limit theory of martingales (see e.g. \cite{halletheyde80}) but comparatively little work has been done on the law of large numbers for martingale arrays. 

One of the most effective approach to proving the strong law of large numbers for martingales is via the Kolmogorov's inequality for martingales obtained by Chow (\cite{chow60}) and Birnbaum-Marshall (\cite{birnbaumetmarshall60}).
\begin{theorem}[Chow-Birnbaum-Marshall's inequality]
Let $\{\left(S_k,\F_k\right),\; k \geq 1\}$ be a sub-martingale and $\{c_k, k\geq
1\}$ a non-increasing real-valued sequence. For $p \geq 1$ and $n \leq N$
\[
\p\left( \sup_{n \leq m \leq N} c_m |S_m| \geq 1 \right) \leq c_N^p \E[|S_N|^p] + \sum_{m=n}^{N-1}
(c_m^p -c_{m+1}^p) \; \E[|S_m|^p].
\]
\end{theorem}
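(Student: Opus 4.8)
The plan is to follow the classical route for Doob-type maximal inequalities: partition the event $\{\sup_{n\le m\le N} c_m|S_m|\ge1\}$ according to the first index at which the threshold is crossed, apply a Markov-type estimate level by level, and then rearrange the resulting sum by summation by parts so that the coefficients $c_m^p$ collapse into the increments $c_m^p-c_{m+1}^p$.

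Concretely, I would first set $X_m:=|S_m|^p$ and note that, for $p\ge1$, $\{X_m\}$ is again a nonnegative submartingale (directly when $\{S_m\}$ is a nonnegative submartingale, and otherwise via Jensen's inequality applied to the convex nondecreasing maps $x\mapsto|x|$ and $x\mapsto x^p$ on $[0,\infty)$, e.g. when $\{S_m\}$ is a martingale), and that $c_m^p-c_{m+1}^p\ge0$ since $\{c_m\}$ is positive and non-increasing. Next, introduce the disjoint events $A_m:=\{c_j|S_j|<1,\ n\le j\le m-1\}\cap\{c_m|S_m|\ge1\}$ for $n\le m\le N$, so that $A_m\in\F_m$ and $\bigcup_{m=n}^N A_m=\{\sup_{n\le m\le N}c_m|S_m|\ge1\}$. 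On $A_m$ one has $1\le c_m^pX_m$, hence $\p(A_m)\le c_m^p\,\E[X_m\mathbf{1}_{A_m}]$, and summing over $m$, $\p(\sup_{n\le m\le N}c_m|S_m|\ge1)\le\sum_{m=n}^N c_m^p\,\E[X_m\mathbf{1}_{A_m}]$.

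The remaining step is the summation by parts. Writing $c_m^p=c_N^p+\sum_{j=m}^{N-1}(c_j^p-c_{j+1}^p)$ and swapping the order of summation in the double sum ($\sum_{m=n}^N\sum_{j=m}^{N-1}=\sum_{j=n}^{N-1}\sum_{m=n}^{j}$), the bound becomes $c_N^p\sum_{m=n}^N\E[X_m\mathbf{1}_{A_m}]+\sum_{j=n}^{N-1}(c_j^p-c_{j+1}^p)\sum_{m=n}^{j}\E[X_m\mathbf{1}_{A_m}]$. For any fixed terminal index $k$ and any $m\le k$ I would use the submartingale property of $\{X_m\}$ on the set $A_m\in\F_m$ to get $\E[X_m\mathbf{1}_{A_m}]\le\E[X_k\mathbf{1}_{A_m}]$, then disjointness of the $A_m$ together with $X_k\ge0$ to get $\sum_{m=n}^k\E[X_k\mathbf{1}_{A_m}]=\E[X_k\mathbf{1}_{\cup_{m=n}^kA_m}]\le\E[X_k]=\E[|S_k|^p]$. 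Applying this with $k=N$ in the first term and with $k=j$ in the second (using $c_j^p-c_{j+1}^p\ge0$ there) yields precisely $c_N^p\E[|S_N|^p]+\sum_{m=n}^{N-1}(c_m^p-c_{m+1}^p)\E[|S_m|^p]$, which is the claim. The index bookkeeping in the double sum and the level-set Markov estimate are routine; the only point that genuinely needs care is the reduction to a submartingale $\{|S_m|^p\}$, so the statement should be read with $\{S_m\}$ nonnegative or a martingale (equivalently, with $|S_m|$ replaced by $S_m^+$), and with the tacit assumption $\E[|S_m|^p]<\infty$ for the relevant $m$; otherwise the right-hand side is infinite and there is nothing to prove.
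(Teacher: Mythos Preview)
Your proof is correct and is the classical argument. The paper does not actually supply a proof of this theorem---it is quoted as a known result with references to Chow and Birnbaum--Marshall---so there is no line-by-line comparison to make. That said, the technique the paper uses to prove its array generalization (Theorem~\ref{thm2}) specializes, when $R_j\equiv 0$, to precisely your argument: the paper packages your Abel summation into the auxiliary process $Z_m=c_m^p|S_{n,m}|^p+\sum_{j=n}^{m-1}(c_j^p-c_{j+1}^p)|S_{n,j}|^p$, observes that $\{Z_m\}$ is a submartingale dominating $c_m^p|S_{n,m}|^p$, and then bounds $\sum_m Z_m\mathbf{1}_{B_m}$ by $\E[Z_N]$ via the same first-crossing decomposition and conditional-expectation step you carry out. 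In other words, the paper performs the summation by parts \emph{before} invoking the submartingale inequality, while you do it \emph{after}; the content is identical. Your caveat that one needs $\{|S_m|^p\}$ itself to be a submartingale---hence $\{S_m\}$ nonnegative or a martingale, or else replace $|S_m|$ by $S_m^+$---is well taken and applies equally to the paper's statement and to its proof of Theorem~\ref{thm2}.
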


The following theorem gives an extension to martingale arrays. We introduce the sequence
\[S_{n,k}=M_{n,k}=\sum_{i=1}^k D_{n,i},\;\;\; 1\leq k\leq n \;\;\; \mbox{ and }\;\;\; S_{n,k}=\sum_{i=1}^n D_{n,i}+\sum_{j=n+1}^kD_{j,j},\;\;\;k>n.\]
\[R_n:= \sum_{j=1}^{n-1}\left(D_{n,j}-D_{n-1,j}\right).\]
\begin{theorem}\label{thm2}
Let $\{D_{n,i},\F_{n,i},\;1\leq i\leq n\}$, $n\geq 1$ be a martingale-difference array and $\{c_n,\;n\geq 1\}$ a non-increasing sequence of positive numbers. Assume that $\F_{n,i}=\F_i$ for all $i,n$. For $n\leq m\leq N$, $p\geq 1$ and $\lambda>0$
\begin{multline}2^{-p}\lambda^p\p\left(\max_{n\leq m\leq N}c_m|M_{m,m}|>\lambda\right)\leq c_N^p\E\left(|S_{n,N}|^p\right)+\sum_{j=n}^{N-1}\left(c_j^p-c_{j+1}^p\right)\E\left(|S_{n,j}|^p\right) \\
+ \E\left[\left(\sum_{j=n+1}^N c_j|R_j|\right)^p\right].\end{multline}
\end{theorem}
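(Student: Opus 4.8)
The plan is to reduce the array statement to the classical Chow--Birnbaum--Marshall inequality applied to a single martingale, paying for the reduction with an error term controlled by Markov's inequality. Fix $n$. The crucial first step is the algebraic identity
\[
M_{m,m}=S_{n,m}+\sum_{j=n+1}^{m}R_j,\qquad m\geq n.
\]
Indeed, from the definition $R_j=M_{j,j-1}-M_{j-1,j-1}$, so that $M_{j,j}-M_{j-1,j-1}=R_j+D_{j,j}$; telescoping from $n+1$ to $m$ gives $M_{m,m}=M_{n,n}+\sum_{j=n+1}^{m}D_{j,j}+\sum_{j=n+1}^{m}R_j$, and $M_{n,n}+\sum_{j=n+1}^{m}D_{j,j}=S_{n,m}$ by definition.

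Next I would verify that $\{(S_{n,m},\F_m),\,m\geq n\}$ is a martingale. For $m\geq n$ the increment is $S_{n,m+1}-S_{n,m}=D_{m+1,m+1}$, and since the standing assumption $\F_{n,i}=\F_i$ gives $\F_{m+1,m}=\F_m$, we get $\E(D_{m+1,m+1}\mid\F_m)=\E(D_{m+1,m+1}\mid\F_{m+1,m})=0$; integrability is clear since each $S_{n,m}$ is a finite sum of integrable variables. Hence $\{(|S_{n,m}|,\F_m),\,m\geq n\}$ is a non-negative submartingale and the Chow--Birnbaum--Marshall inequality applies to it on the range $n\leq m\leq N$; applying it to the rescaled weights $2c_m/\lambda$ (still non-increasing) replaces the threshold $1$ by $\lambda/2$ and produces the factor $(2/\lambda)^p$ in front of the first two terms of the bound.

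Now, since $\{c_n\}$ is non-increasing we have $c_m\leq c_j$ whenever $n<j\leq m$, so the identity yields
\[
c_m|M_{m,m}|\leq c_m|S_{n,m}|+\sum_{j=n+1}^{m}c_m|R_j|\leq c_m|S_{n,m}|+\sum_{j=n+1}^{N}c_j|R_j|.
\]
Writing $T:=\sum_{j=n+1}^{N}c_j|R_j|$, taking the maximum over $n\leq m\leq N$, and using $\{A+T>\lambda\}\subseteq\{A>\lambda/2\}\cup\{T>\lambda/2\}$, one obtains
\[
\p\Big(\max_{n\leq m\leq N}c_m|M_{m,m}|>\lambda\Big)\leq \p\Big(\max_{n\leq m\leq N}c_m|S_{n,m}|>\lambda/2\Big)+\p\big(T>\lambda/2\big).
\]
Bounding the first probability by the rescaled Chow--Birnbaum--Marshall inequality, the second by Markov's inequality $\p(T>\lambda/2)\leq(2/\lambda)^p\E(T^p)$, and then multiplying through by $2^{-p}\lambda^p$, gives exactly the asserted bound.

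The only genuinely non-routine ingredient is spotting the decomposition in the first step: recognizing $S_{n,m}$ as ``row $n$ frozen and then continued along the diagonal'' and $\sum_{j>n}R_j$ as the accumulated discrepancy between consecutive rows. Everything after that is the classical maximal inequality together with a union bound; the one point to handle with a little care is the direction of the rescaling in Chow--Birnbaum--Marshall and the harmless passage between ``$>\lambda/2$'' and ``$\geq\lambda/2$''. Note also that the hypothesis $\F_{n,i}=\F_i$ is used precisely to make $S_{n,\cdot}$ a martingale with respect to the common filtration.
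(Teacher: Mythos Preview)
Your proof is correct and follows essentially the same approach as the paper: the identical decomposition $M_{m,m}=S_{n,m}+\sum_{j=n+1}^m R_j$, the observation that $\{(S_{n,m},\F_m)\}$ is a martingale, the split at level $\lambda/2$ between the $S$-part and the $R$-part, and Markov's inequality on the remainder. The only cosmetic difference is that you invoke the Chow--Birnbaum--Marshall inequality as a black box, whereas the paper re-derives it inline via the auxiliary submartingale $Z_m=c_m^p|S_{n,m}|^p+\sum_{j=n}^{m-1}(c_j^p-c_{j+1}^p)|S_{n,j}|^p$.
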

\begin{proof}For $n\leq m\leq N$, we have $M_{m,m}=M_{m-1,m-1}+D_{m,m}+R_m$ leading to the decomposition
\[M_{m,m}=M_{n,n}+\sum_{j=n+1}^mD_{j,j}+\sum_{j=n+1}^mR_j=S_{n,m}+\sum_{j=n+1}^mR_j.\]
We note that $\{(S_{n,m},\F_m),\;1\leq m\leq N\}$ is a martingale. We also introduce
\[Z_m=c_n^p|S_{n,n}|^p+\sum_{j=n+1}^mc_j^p\left(|S_{n,j}|^p-|S_{n,j-1}|^p\right).\]
It is easy to check that $Z_m$ has the alternative form
\[Z_m=c_m^p|S_{n,m}|^p+\sum_{j=n}^{m-1}\left(c_j^p-c_{j+1}^p\right)|S_{n,j}|^p.\]
Since $\{(|S_{n,m}|^p,\F_m),\;1\leq m\leq N\}$ is a sub-martingale and $\{c_k,\;k\geq 1\}$ is non-increasing, we have $\E\left(Z_m|\F_{m-1}\right)\geq Z_{m-1}$, that is $\{(Z_m,\F_m),\;n\leq m\leq N\}$ is a sub-martingale. For $n\leq m\leq N$, we introduce the sets $A_m^{(1)}:=\{c_m|S_{n,m}|>\lambda/2\}$, $A_m^{(2)}:=\{c_m|\sum_{j=n+1}^m R_j|>\lambda/2\}$ and $B_m:=\{c_j|M_{j,j}|\leq \lambda, j=n,\ldots,m-1,\; c_m|M_{m,m}|>\lambda\}$. We have:
\begin{eqnarray*}
\lambda^p\p\left(\max_{n\leq m\leq N}c_m|M_{m,m}|>\lambda\right)&=&\E\left(\sum_{m=n}^N\lambda^p\textbf{1}_{B_m}\right)\leq \E\left(\sum_{m=n}^N\lambda^p\textbf{1}_{B_m\cap A_m^{(1)}}\right)+\E\left(\sum_{m=n}^N\lambda^p\textbf{1}_{B_m\cap A_m^{(2)}}\right)\\
&\leq & \E\left(\sum_{m=n}^N2^p|c_mM_{m,m}|^p\textbf{1}_{B_m\cap A_m^{(1)}}\right)+\E\left(\sum_{m=n}^N2^p|c_m\sum_{j=n+1}^m R_j|^p\textbf{1}_{B_m\cap A_m^{(2)}}\right)\\
&\leq & \E\left(\sum_{m=n}^N2^pZ_m\textbf{1}_{B_m\cap A_m^{(1)}}\right)+2^p\E\left[\left(\sum_{j=n+1}^N c_j|R_j|\right)^p\right]\\
&\leq & \E\left[\sum_{m=n}^N2^p\E\left(Z_N\textbf{1}_{B_m\cap A_m^{(1)}}|\F_m\right)\right]+2^p\E\left[\left(\sum_{j=n+1}^N c_j|R_j|\right)^p\right]\\
&\leq & 2^p\E\left[Z_N + \left(\sum_{j=n+1}^N c_j|R_j|\right)^p\right].
\end{eqnarray*}
\end{proof}

In many situations, one deals with martingale arrays whose rows are asymptotically stable in the sense that the sequence $\E\left[|R_n|\right]$ converges to zero as $n$ increases to infinity. Theorem \ref{thm2} can be used to prove a strong law of large numbers for such martingale arrays.
\begin{corollary}\label{cor1}
Let $\{D_{n,i},\F_{n,i},\;1\leq i\leq n\}$, $n\geq 1$ be a martingale-difference array and $\{c_n,\;n\geq 1\}$ a non-increasing sequence of positive numbers. Assume that $\F_{n,i}=\F_i$ for all $i,n$. Suppose that there exists $p\geq 1$ such that for any $n_0\geq 1$
\begin{equation}\label{assump}
\lim_{n\to\infty}\left(c_n^p\E\left[|S_{n_0,n}|^p\right]+ \sum_{k=n}^\infty\left(c_k^p-c_{k+1}^p\right)\E\left[|S_{n,k}|^p\right]\right)=0,\;\; \mbox{ and }\;\; \sum_{n\geq 1} c_n\E^{1/p}\left[|R_n|^p\right]<\infty.\end{equation}
Then $c_nM_{n,n}$ converges almost surely to zero.\end{corollary}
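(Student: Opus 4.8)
The plan is to deduce the corollary from Theorem \ref{thm2} via a standard Borel–Cantelli / tail-bound argument, exploiting the fact that $c_nM_{n,n}$ converges to zero almost surely if and only if for every $\eps>0$ the event $\{\sup_{m\geq n}c_m|M_{m,m}|>\eps\}$ has probability tending to zero as $n\to\infty$. So fix $\eps>0$ and $n_0\geq 1$. For $n_0\leq n\leq N$, apply Theorem \ref{thm2} with $\lambda=\eps$ to get
\[
2^{-p}\eps^p\,\p\!\left(\max_{n\leq m\leq N}c_m|M_{m,m}|>\eps\right)\leq c_N^p\E\left(|S_{n,N}|^p\right)+\sum_{j=n}^{N-1}\left(c_j^p-c_{j+1}^p\right)\E\left(|S_{n,j}|^p\right)+\E\left[\left(\sum_{j=n+1}^N c_j|R_j|\right)^p\right].
\]
Then I would let $N\to\infty$: on the left the events increase to $\{\sup_{m\geq n}c_m|M_{m,m}|>\eps\}$, so by monotone convergence the left side converges to $2^{-p}\eps^p\,\p(\sup_{m\geq n}c_m|M_{m,m}|>\eps)$. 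On the right, the first two terms together are exactly the expression whose limit (as the running index $\to\infty$) is assumed to vanish in the first half of \eqref{assump} — but here I must be slightly careful, since in \eqref{assump} the sum $\sum_{k=n}^\infty(c_k^p-c_{k+1}^p)\E[|S_{n,k}|^p]$ already has $n$ in both the summation range \emph{and} inside $S_{n,k}$, whereas after letting $N\to\infty$ in the inequality I obtain $c_N^p\E|S_{n,N}|^p$ with the fixed lower index $n$. The cleanest route is: first bound $\limsup_{N\to\infty}c_N^p\E|S_{n,N}|^p$ — this is controlled because $c_N^p\E|S_{n,N}|^p\leq c_N^p\E|S_{n_0,N}|^p$ is not automatic, so instead I would use the identity from the proof of Theorem \ref{thm2}, namely $c_N^p\E|S_{n,N}|^p+\sum_{j=n}^{N-1}(c_j^p-c_{j+1}^p)\E|S_{n,j}|^p=\E(Z_N)$ which is nondecreasing in $N$, so the right-hand side of the displayed inequality (first two terms) increases to $c_n^p\E|S_{n,n}|^p+\sum_{j=n}^\infty(c_j^p-c_{j+1}^p)\E|S_{n,j}|^p$, which is precisely (with $n_0$ replaced by $n$) the quantity appearing inside the limit in \eqref{assump}.

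For the $R_j$ term, letting $N\to\infty$ and using the triangle inequality in $L^p$ (Minkowski),
\[
\E^{1/p}\left[\left(\sum_{j=n+1}^N c_j|R_j|\right)^p\right]\leq \sum_{j=n+1}^N c_j\E^{1/p}\left[|R_j|^p\right]\leq \sum_{j\geq n+1} c_j\E^{1/p}\left[|R_j|^p\right],
\]
and the right side is the tail of the convergent series in the second half of \eqref{assump}, hence tends to $0$ as $n\to\infty$. Combining, for each fixed $n$ (taking $n_0=n$ in the first assumption),
\[
2^{-p}\eps^p\,\p\!\left(\sup_{m\geq n}c_m|M_{m,m}|>\eps\right)\leq \left(c_n^p\E|S_{n,n}|^p+\sum_{j=n}^\infty(c_j^p-c_{j+1}^p)\E|S_{n,j}|^p\right)+\left(\sum_{j\geq n+1}c_j\E^{1/p}|R_j|^p\right)^p.
\]
Now let $n\to\infty$: the first parenthesis tends to $0$ by \eqref{assump} (with $n_0=n$, noting the limit there is over the running index which equals $n$), and the second tends to $0$ as the $p$-th power of a tail of a convergent series. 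Therefore $\p(\sup_{m\geq n}c_m|M_{m,m}|>\eps)\to 0$. Since this holds for every $\eps>0$, and since $\{\sup_{m\geq n}c_m|M_{m,m}|>\eps\}$ is decreasing in $n$, we conclude $\p(\limsup_n c_n|M_{n,n}|>\eps)=0$ for all $\eps$, whence $c_nM_{n,n}\to 0$ almost surely.

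The main subtlety — and where I would spend the most care — is the bookkeeping around the index $n$ appearing simultaneously as a summation limit and inside $S_{n,k}$ in the hypothesis \eqref{assump}: one must match the quantity produced by Theorem \ref{thm2} after $N\to\infty$ (namely $\E(Z_\infty)=c_n^p\E|S_{n,n}|^p+\sum_{j=n}^\infty(c_j^p-c_{j+1}^p)\E|S_{n,j}|^p$) exactly with the bracketed limit in \eqref{assump} evaluated along the diagonal $n_0=n$. The monotonicity of $\E(Z_N)$ in $N$, already established inside the proof of Theorem \ref{thm2}, is the key fact that legitimizes passing to the limit $N\to\infty$ under the expectation; everything else is a routine combination of Minkowski's inequality, monotone convergence, and the $\eps$-criterion for almost sure convergence.
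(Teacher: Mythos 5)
Your overall strategy---apply Theorem \ref{thm2}, let $N\to\infty$, then let $n\to\infty$, and use the criterion that $c_nM_{n,n}\to 0$ a.s.\ iff $\p\left(\sup_{m\geq n}c_m|M_{m,m}|>\eps\right)\to 0$ for every $\eps>0$---is the right (and surely intended) one, and your treatment of the $R$-term via Minkowski and of the passage $N\to\infty$ on the left-hand side is correct. The flaw sits exactly where you said the main subtlety is. First, $\E(Z_N)=c_N^p\E|S_{n,N}|^p+\sum_{j=n}^{N-1}(c_j^p-c_{j+1}^p)\E|S_{n,j}|^p$ does \emph{not} increase to $c_n^p\E|S_{n,n}|^p+\sum_{j=n}^{\infty}(c_j^p-c_{j+1}^p)\E|S_{n,j}|^p$: its limit is $\lim_{N}c_N^p\E|S_{n,N}|^p+\sum_{j=n}^{\infty}(c_j^p-c_{j+1}^p)\E|S_{n,j}|^p$, and there is no reason for $\lim_{N}c_N^p\E|S_{n,N}|^p$ to equal $c_n^p\E|S_{n,n}|^p$ (the submartingale property pushes $\E|S_{n,N}|^p$ up while $c_N$ decreases). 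Second, and more seriously, your final step requires $c_n^p\E|S_{n,n}|^p+\sum_{j=n}^{\infty}(c_j^p-c_{j+1}^p)\E|S_{n,j}|^p\to 0$, i.e.\ the hypothesis ``along the diagonal $n_0=n$''. But \eqref{assump} is a limit in $n$ for each \emph{fixed} $n_0$; since $S_{n,n}=M_{n,n}$, the diagonal statement contains $c_n^p\E|M_{n,n}|^p\to 0$, which is essentially the $L^p$ form of the very conclusion you are proving. It does follow from the hypotheses, but only via an extra argument (write $M_{n,n}=S_{n_0,n}+\sum_{j=n_0+1}^{n}R_j$, apply Minkowski, and let $n$ then $n_0$ tend to infinity), not by citing \eqref{assump}.

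The repair is simpler than that extra argument: do not carry the term $c_n^p\E|S_{n,n}|^p$ at all. For fixed $n$, the first half of \eqref{assump} applied with $n_0:=n$ and running index $N$ gives $c_N^p\E|S_{n,N}|^p\to 0$ as $N\to\infty$; hence the first two terms of the bound in Theorem \ref{thm2} satisfy
\[
\limsup_{N\to\infty}\left[c_N^p\E|S_{n,N}|^p+\sum_{j=n}^{N-1}\left(c_j^p-c_{j+1}^p\right)\E|S_{n,j}|^p\right]=\sum_{j=n}^{\infty}\left(c_j^p-c_{j+1}^p\right)\E|S_{n,j}|^p,
\]
and this is a nonnegative summand of the bracketed quantity in \eqref{assump}, so it tends to $0$ as $n\to\infty$. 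Combined with your (correct) Minkowski bound on the $R$-term, this closes the proof. In short: the two pieces of the first condition in \eqref{assump} are meant to be used at two different stages---the $c_n^p\E[|S_{n_0,n}|^p]$ piece as $N\to\infty$ with $n$ playing the role of $n_0$, and the series piece as $n\to\infty$---rather than merged into a single diagonal limit.
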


\begin{remark}
With respect to the process $\{R_n\}$ in Theorem \ref{thm2}, we point out that, because of the assumption $\F_{n,j}=\F_j$, the sequence $\{\sum_{j=1}^{k}\left(D_{n,j}-D_{n-1,j} \right),\F_k,\; 1\leq k\leq n-1\}$ is a also martingale.
\end{remark}

The conditions in Corollary \ref{cor1} are expressed in terms of moments of martingales. These moments can be nicely bounded by moments of the martingale differences. We give one such bound in the next proposition. It is a consequence of the Burkholder's inequality (\cite{halletheyde80}, Theorem 2.10) and some classical convexity inequalities. We omit the details.
\begin{proposition}\label{prop1}Let $\{D_{n,i},\F_{n,i},\;1\leq i\leq n\}$, $n\geq 1$ be a martingale-difference array. For any $p>1$,
\begin{equation}\label{boundM}
\E\left[\left|M_{n,k}\right|^p\right]\leq Ck^{\max(p/2,1)-1}\sum_{j=1}^k\E\left(\left|D_{n,j}\right|^p\right),\end{equation}
where $C=\left(18pq^{1/2}\right)^p$, $p^{-1}+q^{-1}=1$.
\end{proposition}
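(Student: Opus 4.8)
The plan is to feed the martingale $\{(M_{n,i},\F_{n,i}),\;1\le i\le k\}$ (with the row index $n$ held fixed) into Burkholder's inequality and then control the resulting quadratic-variation term by an elementary convexity estimate, distinguishing the two regimes $p\ge 2$ and $1<p<2$.

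First I would apply Burkholder's inequality in the form of \cite{halletheyde80}, Theorem 2.10, to the (finite) martingale with differences $D_{n,1},\dots,D_{n,k}$. This gives, for every $p>1$,
\[\E\left[|M_{n,k}|^p\right]\le C\,\E\left[\Bigl(\sum_{j=1}^k D_{n,j}^2\Bigr)^{p/2}\right],\qquad C=(18pq^{1/2})^p,\quad p^{-1}+q^{-1}=1.\]
Since this already produces exactly the constant $C$ claimed in \eqref{boundM}, all that remains is the deterministic inequality $\bigl(\sum_{j=1}^k D_{n,j}^2\bigr)^{p/2}\le k^{\max(p/2,1)-1}\sum_{j=1}^k|D_{n,j}|^p$, which I would then combine with the displayed bound after taking expectations.

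For $p\ge2$ the map $x\mapsto x^{p/2}$ is convex on $[0,\infty)$, so Jensen's inequality for the uniform average over $j=1,\dots,k$ gives $\bigl(\tfrac1k\sum_j D_{n,j}^2\bigr)^{p/2}\le\tfrac1k\sum_j|D_{n,j}|^p$, i.e. $\bigl(\sum_j D_{n,j}^2\bigr)^{p/2}\le k^{p/2-1}\sum_j|D_{n,j}|^p$, and here $\max(p/2,1)-1=p/2-1$. For $1<p<2$ the same function is concave and vanishes at $0$, hence subadditive on $[0,\infty)$, so $\bigl(\sum_j D_{n,j}^2\bigr)^{p/2}\le\sum_j(D_{n,j}^2)^{p/2}=\sum_j|D_{n,j}|^p$, and here $\max(p/2,1)-1=0$. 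Either way the deterministic inequality holds; taking expectations and inserting it into the Burkholder bound yields \eqref{boundM}.

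I do not anticipate a genuine obstacle here: the only mildly delicate points are to quote Burkholder's inequality in precisely the version that carries the constant $(18pq^{1/2})^p$ rather than an unspecified $C_p$, and to notice that the exponent $\max(p/2,1)-1$ is chosen exactly so that a single statement absorbs both the convex case $p\ge2$ (handled by Jensen) and the concave/subadditive case $1<p<2$.
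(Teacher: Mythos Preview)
Your proposal is correct and follows exactly the route the paper indicates: the paper's ``proof'' is merely the sentence that \eqref{boundM} is a consequence of Burkholder's inequality (\cite{halletheyde80}, Theorem~2.10) together with classical convexity inequalities, with details omitted. You have supplied precisely those details, including the case split $p\ge 2$ (Jensen) versus $1<p<2$ (subadditivity of $x\mapsto x^{p/2}$) that produces the exponent $\max(p/2,1)-1$.
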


\section{Kernel regression with Markov chains}
As an application, we prove the strong consistency of the Nadaraya-Watson estimator for nonparametric regression where the data arises from a non-stationary Markov chain. The approach of the proof can be adapted to study other kernel methods or other statistical smoothing procedures with dependent data. We assume the following structure for the data. $\{(X_i,\epsilon_i),\;i\geq 0\}$ is a joint $\Real^2$-valued Markov chain on some probability space $(\Omega,\F,\p)$ such that
\[\p\left((X_n,\epsilon_n)\in A\times B|(X_k,\epsilon_k),\;k\leq n-1\right)=\int_A p(X_{n-1},z)q(z,B)dz,\]
for transition probability densities $p$ and $q$. $p$ is the transition probability density of the marginal Markov chain $\{X_i,\;i\geq 0\}$ and $q(x,A)=p\left(\eps_n\in A| X_n=x\right)$ is the transition probability density of the error term $\eps_n$. All densities are with respect to the Lebesgue measure denoted $dx$. We assume that $p$ has an invariant distribution $\pi$ (that is $\pi(x)=\int_{\Real}\pi(y)p(y,x)dy$, $x\in\Real$) and
\begin{equation}\label{meanerror}\int_\Real \pi(x)\left(\int_\Real \epsilon q(x,\epsilon)d\epsilon\right)dx=0.\end{equation}
We consider the dependent variable
\[Y_i=r(X_i)+\epsilon_i,\;\;\;i\geq 0.\]
We are interested in estimating the regression function $r$. Note that the error terms $\epsilon_i$ are correlated and we do not assume that $\E\left(\epsilon_i\right)= 0$ unless, as assumed in  (\ref{meanerror}), the Markov chain $\{X_i,\;i\geq 0\}$ is in stationarity. For the reader's convenience, we will sometimes use the notation $\E(U(Y)|X=x)$ to denote the integral $\int_\Real U\left(r(x)+\epsilon\right)q(x,\epsilon)d\epsilon$, whenever such integral is well-defined.  A popular nonparametric estimator for $r$ is the Nadaraya-Watson estimator
\begin{equation}\hat r_n(x_0)=\frac{\sum_{i=1}^nY_iK\left(\frac{x_0-X_i}{h_n}\right)}{\sum_{i=1}^nK\left(\frac{x_0-X_i}{h_n}\right)},\;\;\;x_0\in\Real.\end{equation}
where $K$ is the kernel (a nonnegative function such that $\int_\Real K(x)dx=1$) and $h_n>0$ the bandwidth. Let $\psi:\;\Real\to\Real$ be a measurable function. We study the almost sure convergence of
\begin{equation*}\label{estimator}\hat r_{n,\psi}(x_0)=\frac{1}{nh_n}\sum_{i=1}^n\psi(Y_i)K\left(\frac{x_0-X_i}{h_n}\right),\end{equation*}
as $n\to\infty$. We can then deduce the convergence of the Nadaraya-Watson estimator by setting $\psi(x)=x$ for the numerator and $\psi(x)=1$ for the denominator.

Let $\mu$ be the distribution of $X_0$, the initial distribution of the Markov chain. We write $P$ for the Markov kernel induced by $p$ which operates on nonnegative bounded measurable functions as $Pf(x)=\int_{\Real}p(x,y)f(y)dy$. The iterates operators of $P$ are defined as $P^0f(x)=f(x)$ and for $n\geq 1$, $P^nf(x)=P(P^{n-1}f)(x)$. We will assume that $P$ is geometrically ergodic. That is

\debutB
\item \label{B1} $P$ is $\phi$-irreducible, aperiodic and there exist a function $V:\; \Real\to [1,\infty)$, $\lambda\in (0,1)$, $b\in (0,\infty)$ such that
\[PV(x)\leq \lambda V(x)+b\textbf{1}_\mathcal{C}(x),\]
for some small set $\mathcal{C}$.
\finB

This assumption is a well known stability assumption for Markov kernels extensively studied in \cite{meynettweedie93}. One important consequence of  (B\ref{B1}) that we will use is the following. For any $\alpha\in (0,1]$, there exists $C(\alpha)<\infty$ such that for all $n\geq 0$,
\begin{equation}\label{geoergo}\sup_{|f|_{V^\alpha}\leq 1}\left|P^nf(x)-\int_{\Real}f(x)\pi(x)dx\right|\leq C(\alpha)\rho^nV^\alpha(x),\;\;\;\;x\in\Real\end{equation}
where $|f|_{V^\alpha}:=\sup_{x\in\Real}\frac{|f(x)|}{V^\alpha(x)}$. A proof can be found \cite{meynettweedie93}, Chapter 15.

We assume that $\mu(V):=\int_{\Real}V(x)\mu(x)dx<\infty$. By iterating the drift condition (B\ref{B1}), it is easy to see that
\begin{equation}\label{boundV}
\sup_{n\geq 0}\E\left[V(X_n)\right]\leq \mu(V)+b/(1-\lambda)<\infty.\end{equation}

On the function $\psi$, we assume that
\begin{equation}\label{CondFun}
\sup_{x\in\Real}V^{-1/2}(x)\left(1+|x|\right)\E\left[\left|\psi(Y)\right||X=x\right]<\infty,\;\;\;\mbox{ and }\;\;\;\sup_{x\in\Real}V^{-1}(x)\E\left[\psi^2(Y)|X=x\right]<\infty.\end{equation}
On the kernel $K$, we assume that
\begin{equation}\label{CondKernel}
\sup_{x\in\Real}K(x)<\infty,\;\;\;\; \lim_{|x|\to\infty}|x|K(x)=0,\;\;\;\mbox{ and }\;\;\;\sup_{x\neq x'}\frac{\left|K(x)-K(x')\right|}{|x-x'|}<\infty.\end{equation}
On the sequence $\{h_n,\; n\geq 0\}$, we assume that:
\begin{equation}\label{CondSeq}
h_n\sim n^{-\beta},\;\;\; \mbox{ with }\;\;\;\;\beta\in (0,1/4).\end{equation}

\begin{theorem}\label{thm3}Assume (B\ref{B1}), (\ref{CondFun}-\ref{CondSeq}) and that the function $x\to\pi(x)\E\left(\psi(Y)|X=x\right)$ is continuous at $x_0$. Then $\lim_{n\to\infty}\hat r_{n,\psi}(x_0)=\pi(x_0)\E\left(\psi(Y)|X=x_0\right)$ with $\p$-probability one.
\end{theorem}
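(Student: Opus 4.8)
The plan is to split $\hat r_{n,\psi}(x_0)-\pi(x_0)m_\psi(x_0)$, where $m_\psi(x):=\E(\psi(Y)\,|\,X=x)$, into a deterministic bias term and a random term; to write the random term as the average of a martingale-difference array plus a telescoping remainder, using a Poisson-equation martingale approximation for the Markov part; and then to invoke Corollary~\ref{cor1}. Write $K_n(x):=h_n^{-1}K\big((x_0-x)/h_n\big)$, $f_n:=m_\psi K_n$, $\pi(\phi):=\int_\Real\phi(x)\pi(x)\,dx$, and $\F_i:=\sigma\big((X_k,\epsilon_k),k\le i\big)$. Several elementary facts are used throughout: by the Markov structure the conditional law of $\epsilon_i$ given $\F_{i-1}$ and $X_i$ is $q(X_i,\cdot)$, so $\E\big(\psi(Y_i)K_n(X_i)\,|\,\F_{i-1}\big)=\E\big(m_\psi(X_i)K_n(X_i)\,|\,\F_{i-1}\big)=Pf_n(X_{i-1})$; by (\ref{CondFun}), $|m_\psi(x)|\le C\,V^{1/2}(x)/(1+|x|)$ and $\mathrm{Var}\big(\psi(Y_i)\,|\,X_i\big)\le\E\big(\psi^2(Y_i)\,|\,X_i\big)\le C\,V(X_i)$; and under (B\ref{B1}) one has $\pi(V)<\infty$, hence $\int_\Real|m_\psi\pi|\le C\pi(V^{1/2})<\infty$.

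Writing $\hat r_{n,\psi}(x_0)-\pi(x_0)m_\psi(x_0)=\frac1n\sum_{i=1}^n\big(\psi(Y_i)K_n(X_i)-\pi(f_n)\big)+\big(\pi(f_n)-\pi(x_0)m_\psi(x_0)\big)$, the last summand is deterministic and equals $\int_\Real(m_\psi\pi)(x_0-h_nu)K(u)\,du-(m_\psi\pi)(x_0)$; it tends to $0$ by the continuity of $m_\psi\pi$ at $x_0$, $\int K=1$, the decay $|x|K(x)\to0$ from (\ref{CondKernel}), and $m_\psi\pi\in L^1$. For the random part, since $|f_n|_{V^{1/2}}\le Ch_n^{-1}$, (\ref{geoergo}) gives $|P^kf_n-\pi(f_n)|_{V^{1/2}}\le C\rho^k h_n^{-1}$, so $\hat f_n:=\sum_{k\ge0}\big(P^kf_n-\pi(f_n)\big)$ converges, solves the Poisson equation $f_n-\pi(f_n)=\hat f_n-P\hat f_n$, and satisfies $|\hat f_n|_{V^{1/2}}\vee|P\hat f_n|_{V^{1/2}}\le Ch_n^{-1}$ (using $PV^{1/2}\le(PV)^{1/2}\le CV^{1/2}$). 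Setting
\[
D_{n,i}:=\big(\psi(Y_i)-m_\psi(X_i)\big)K_n(X_i)+\hat f_n(X_i)-P\hat f_n(X_{i-1}),
\]
one gets $\psi(Y_i)K_n(X_i)-\pi(f_n)=D_{n,i}+\big(P\hat f_n(X_{i-1})-P\hat f_n(X_i)\big)$, and $\{(D_{n,i},\F_i),\,1\le i\le n\}$ is a martingale-difference array, both summands of $D_{n,i}$ having vanishing $\F_{i-1}$-conditional mean (for the second because $\E(\hat f_n(X_i)\,|\,\F_{i-1})=P\hat f_n(X_{i-1})$). Summing over $i$, the extra term telescopes to $n^{-1}\big(P\hat f_n(X_0)-P\hat f_n(X_n)\big)$; since $|P\hat f_n(x)|\le Cn^{\beta}V^{1/2}(x)$ and $\sup_n\E(V(X_n))<\infty$ by (\ref{boundV}), Markov's inequality and Borel--Cantelli (here $\beta<1/2$ is used) give $n^{\beta-1}V^{1/2}(X_n)\to0$ a.s., so this term vanishes almost surely. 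It remains to prove $\frac1n\sum_{i=1}^nD_{n,i}\to0$ a.s.

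This follows from Corollary~\ref{cor1} applied to $\{(D_{n,i},\F_i)\}$ with $c_n=1/n$ and $p=2$, so that $c_nM_{n,n}=\frac1n\sum_{i=1}^nD_{n,i}$. The moments in (\ref{assump}) are bounded using Proposition~\ref{prop1} (Burkholder) and (\ref{geoergo}), the latter to pass from the law $\mu P^j$ of $X_j$ to $\pi$ at the price of a term $C\rho^j\|\cdot\|_\infty$. A change of variables $x=x_0-h_ju$, together with $K(u)\le C(1+|u|)^{-1}$ and $\pi(V)<\infty$, gives $\int K_j^2\,V\,d\pi\le Ch_j^{-1}$ and hence $\E(D_{j,j}^2)\le Ch_j^{-2}=Cj^{2\beta}$; thus $\E(|S_{n_0,n}|^2)\le C_{n_0}+C\sum_{j\le n}j^{2\beta}\le C_{n_0}+Cn^{2\beta+1}$, and, with $c_k^2-c_{k+1}^2\le Ck^{-3}$, the first requirement in (\ref{assump}) is $O(n^{2\beta-1})+o(1)\to0$ because $\beta<1/2$. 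For the second requirement, $R_n=\sum_{j=1}^{n-1}(D_{n,j}-D_{n-1,j})$ is a martingale sum (cf.\ the Remark), so by Proposition~\ref{prop1} one has $\E(|R_n|^2)\le C\sum_{j=1}^{n-1}\E\big[(D_{n,j}-D_{n-1,j})^2\big]$, and the summands reduce to second moments of $K_n-K_{n-1}$ and, through the resolvent, of $\hat f_n-\hat f_{n-1}=\widehat{m_\psi(K_n-K_{n-1})}$; from the Lipschitz bound on $K$ in (\ref{CondKernel}) and $h_n\sim n^{-\beta}$ one gets $|K_n(x)-K_{n-1}(x)|\le C\big(n^{\beta-1}+n^{2\beta-1}|x_0-x|\big)$, whence $|\hat f_n-\hat f_{n-1}|_{V^{1/2}}\le Cn^{2\beta-1}$, and after interpolating this Lipschitz estimate with the decay bound $K_n\le C(h_n+|x_0-x|)^{-1}$ to control the tails one is led to $\E(|R_n|^2)\le Cn^{4\beta-1}$, so that $\sum_n c_n\,\E^{1/2}(|R_n|^2)\le C\sum_n n^{2\beta-3/2}<\infty$ precisely because $\beta<1/4$ by (\ref{CondSeq}). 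Corollary~\ref{cor1} then yields $\frac1n\sum_{i=1}^nD_{n,i}\to0$ a.s., which together with the two preceding paragraphs proves Theorem~\ref{thm3}.

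I expect the last step --- the row-difference estimate --- to be the main obstacle. One has to bound $K_n-K_{n-1}$ (and, through $\hat f_n-\hat f_{n-1}$, its resolvent) sharply enough for $R_n$ to be summable against $c_n=1/n$: the crude Lipschitz bound for $K_n-K_{n-1}$ grows linearly in $|x_0-x|$ and must be played off against the $(h_n+|x_0-x|)^{-1}$ decay of $K_n$ coming from $|x|K(x)\to0$ (so that $\|K_n-K_{n-1}\|_{L^1}$ and $\|K_n-K_{n-1}\|_\infty$ are both small), and the numerous geometric-ergodicity error terms $C\rho^j\|\cdot\|_\infty$, whose $\|\cdot\|_\infty$ factors are of order $h_n^{-2}$ or worse, must all be shown summable in $j$. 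It is exactly in making this accounting work that the restriction $\beta<1/4$ in (\ref{CondSeq}) is consumed, whereas $\beta<1/2$ already suffices for the bias term, the telescoping remainder, and the diagonal contributions $\E(D_{j,j}^2)$.
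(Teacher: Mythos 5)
Your decomposition is, up to normalization, exactly the paper's: your $D_{n,i}$ equals $h_n^{-1}$ times the paper's $D_{n,i}$ (your $\hat f_n$ is $h_n^{-1}g_{h_n}$, and by the Poisson equation $(\psi(Y_i)-m_\psi(X_i))K_n(X_i)+\hat f_n(X_i)-P\hat f_n(X_{i-1})=\psi(Y_i)K_n(X_i)-\pi(f_n)+P\hat f_n(X_i)-P\hat f_n(X_{i-1})$), so you have simply moved the factor $h_n^{-1}$ from $c_n$ into the array. The bias term, the telescoping remainder via Borel--Cantelli, the verification of the first half of (\ref{assump}), and the resolvent part of the $R_n$ estimate (where $|m_\psi(x)|\leq CV^{1/2}(x)/(1+|x|)$ absorbs the linear growth of the Lipschitz increment, giving $|\hat f_n-\hat f_{n-1}|_{V^{1/2}}\leq Cn^{2\beta-1}$) all match the paper and are fine.

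The gap is in the step you yourself single out as the crux. For the contribution of $(\psi(Y_j)-m_\psi(X_j))(K_n-K_{n-1})(X_j)$ to $\E[(D_{n,j}-D_{n-1,j})^2]$ you need $\E\left[V(X_j)(K_n-K_{n-1})^2(X_j)\right]=O(n^{4\beta-2})$, and here only the unweighted bound $\E[\psi^2(Y)|X=x]\leq CV(x)$ is available, since (\ref{CondFun}) puts the $(1+|x|)$ weight only on the first conditional moment. The two bounds you propose to interpolate are $|K_n-K_{n-1}|(x)\leq Cn^{2\beta-1}(1+|x_0-x|)$ and $|K_n-K_{n-1}|(x)\leq C/|x_0-x|$; any interpolation $\min(a,b)\leq a^{\theta}b^{1-\theta}$ that removes the growth in $|x_0-x|$ forces $\theta\leq 1/2$ and hence yields at best $\|K_n-K_{n-1}\|_\infty\leq Cn^{\beta-1/2}$, so with only $\sup_j\E[V(X_j)]<\infty$ the best available bound is $\E[V(X_j)(K_n-K_{n-1})^2(X_j)]\leq Cn^{2\beta-1}$ (the worst case being $V\,d(\mu P^j)$ putting unit mass at distance $n^{1/2-\beta}$ from $x_0$, which nothing in the hypotheses excludes). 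That gives only $\E(|R_n|^2)\leq Cn^{2\beta}$, so the bound you obtain on $\sum_n c_n\E^{1/2}(|R_n|^2)$ is $C\sum_n n^{\beta-1}=\infty$ for every $\beta>0$, and the second half of (\ref{assump}) is not established; the restriction $\beta<1/4$ never gets a chance to act. The paper closes this step differently: it keeps the pure Lipschitz bound, writes $|(H_{h_n}-H_{h_{n-1}})(x,y)|\leq Cn^{\beta-1}(|x|\,|\psi(y)|+V^{1/2}(x))$, and takes the second moment of $|X_j|\,|\psi(Y_j)|$ directly. To make your estimate work you must do the same, i.e.\ invoke (in effect, add) a condition of the form $\sup_x V^{-1}(x)(1+|x|)^2\E[\psi^2(Y)|X=x]<\infty$, so that the $|x_0-x|$ growth is paid for by the conditional second moment rather than by the kernel's decay; with that, $\E(|R_n|^2)\leq Cn^{4\beta-1}$ and the arithmetic leading to $\beta<1/4$ come out exactly as you state.
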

\begin{proof}Throughout the proof, $x_0\in\Real$ is fixed and $C$ will denote a finite constant whose actual value might differ from one appearance to the next. Define $\F_n:=\sigma((X_k,Y_k),\;k\leq n\}$. For $h>0$, define $F_h(x,y)=\psi(y)K\left(\frac{x_0-x}{h}\right)$, $f_h(x)=K\left(\frac{x_0-x}{h}\right)\E\left(\psi(Y)|X=x\right)$, and
\[g_h(x)=\sum_{l\geq 0}\bar P^lf_h(x),\]
where $\bar P^lf(x):=P^lf(x)-\int_\Real f(x)\pi(x)dx$. By (\ref{CondFun}), the boundedness of $K$ and the geometric ergodicity assumption (\ref{geoergo}), $g_h$ is well-defined and satisfies $|g_h|_{V^{1/2}}\leq C(1-\rho)^{-1}$. It is also well-known that $g_h$ solves the Poisson equation for $f_h$ and $P$. In other words, we have
\begin{equation}\label{poisson1}
g_h(x)-P g_h(x)=\bar f_h(x),\end{equation}
 where $\bar f_h(x)=f_h(x)-\int_\Real f_h(x)\pi(x)dx$.

Similarly, define $H_h(x,y)=F_h(x,y)+P g_h(x)$. It is left to the reader to check that \\$\E\left[H_h(X_{n},Y_{n})\vert X_{n-1}=x,Y_{n-1}=y\right]=Pf_h(x)+P^2g_h(x)=Pg_h(x)+\int_\Real f_h(x)\pi(x)dx$ (using (\ref{poisson1})). It follows that
\begin{equation}\label{poisson2}F_h(x,y)-\int \pi(x)f_h(x)dx=H_h(x,y)-\E\left[H_h(X_{n},Y_{n})\vert X_{n-1}=x,Y_{n-1}=y\right],\;x,y\in\Real.\end{equation}
Using (\ref{poisson2}), we can decompose $\hat r_{n,\psi}(x_0)$ as
\begin{multline*}
\hat r_{n,\psi}(x_0)=\frac{1}{h_n}\int_\Real K\left(\frac{x_0-x}{h_n}\right)\E\left[\psi(Y)|X=x\right]\pi(x)dx+\frac{1}{n h_n}\sum_{k=1}^nD_{n,k}\\
+(nh_n)^{-1}\left(\E\left[H_{h_n}(X_1,Y_1)|\F_0\right]-\E\left[H_{h_n}(X_{n+1},Y_{n+1})|\F_n\right]\right),\end{multline*}
where $D_{n,k}=H_{h_n}(X_k,Y_k)-\E\left[H_{h_n}(X_{k},Y_{k})\vert \F_{k-1}\right]$.

Under the stated assumptions, it is a standard result of kernel estimation that \[\lim_{n\to\infty}\frac{1}{h_n}\int_\Real K\left(\frac{x_0-x}{h_n}\right)\E\left[\psi(Y)|X=x\right]\pi(x)dx=\E\left(\psi(Y)|X=x_0\right)\pi(x_0).\]
See e.g. \cite{prakasarao83} for a proof.

We deduce from the drift condition (B\ref{B1}) and (\ref{CondFun}) that
\[\sup_{n\geq 1}\E\left[|H_h(X_n,Y_n)||\F_{n-1}\right]\leq C V^{1/2}(X_{n-1}),\]
for some finite constant $C$ that does not depend on $h$. Combined with (\ref{boundV}) we get for any $\delta>0$,
\[\sum_{k\geq 1}\p\left((nh_n)^{-1}\left|\E\left[H_{h_n}(X_1,Y_1)|\F_0\right]-\E\left[H_{h_n}(X_{n+1},Y_{n+1})|\F_n\right]\right|>\delta\right)\leq C\delta^{-2}\sum_{n\geq 1}n^{-2(1-\beta)}<\infty.\]
This easily implies that the term $(nh_n)^{-1}\left(\E\left[H_{h_n}(X_1,Y_1)|\F_0\right]-\E\left[H_{h_n}(X_{n+1},Y_{n+1})|\F_n\right]\right)$ converges almost surely to zero.

Lastly, the process $\{(D_{n,k},\F_k)\;k\leq n\}$ is a martingale-difference array. Again by (\ref{CondFun}), the boundedness of $K$, the drift condition (B\ref{B1}), $\E\left(|D_{n,k}|^2\right)\leq \E\left[\left|H_{h_n}(X_k,Y_k)\right|^2\right]\leq C\E\left(V(X_k)\right)$. Then using (\ref{boundV}), we obtain that $\sup_{n\geq 0}\sup_{0\leq k\leq n}\E\left[\left|D_{n,k}\right|^2\right]<\infty$. This implies, in the notations of Theorem \ref{thm2}, that $\E\left[|S_{n,m}|^2\right]\leq C m$, for some finite constant $C$ that does not depend on $n$ nor $m$. Moreover, we can write  $D_{n,j}-D_{n-1,j}=H_{h_n}(X_j,Y_j)-H_{h_{n-1}}(X_j,Y_j)-\E\left(H_{h_n}(X_j,Y_j)-H_{h_{n-1}}(X_j,Y_j)\vert \F_{j-1}\right)$, and we note that
\begin{multline*}
\left(H_{h_n}-H_{h_{n-1}}\right)(x,y)=\psi(y)\left(K\left(\frac{x_0-x}{h_n}\right)-K\left(\frac{x_0-x}{h_{n-1}}\right)\right)\\
+\sum_{l\geq 1}\bar P^l\left(K\left(\frac{x_0-x}{h_n}\right)-K\left(\frac{x_0-x}{h_{n-1}}\right)\right)\E\left[\psi(Y)\vert X=x\right].\end{multline*}
By the Lipschitz condition on $K$ and (\ref{CondFun}),
\[\left|\E\left[\psi(Y)\vert X=x\right]\left(K\left(\frac{x_0-x}{h_n}\right)-K\left(\frac{x_0-x}{h_{n-1}}\right)\right)\right|\leq C\left|h_{n-1}^{-1}-h_n^{-1}\right|V^{1/2}(x).\]
Therefore $\left|\left(H_{h_n}-H_{h_{n-1}}\right)(x,y)\right|\leq C\left|h_{n-1}^{-1}-h_n^{-1}\right|\left(|x|\psi(y)+V^{1/2}(x)\right)$ from which we deduce using (\ref{CondFun}) and (\ref{boundV}) that $\E\left(\left|D_{n,j}-D_{n-1,j}\right|^2\right)=O(n^{2(-1+\beta)})$ uniformly in $j$ which implies  as in Proposition \ref{prop1} that $\E^{1/2}\left(\left|\sum_{j=1}^{n-1}D_{n,j}-D_{n-1,j}\right|^2\right)=O(n^{-1/2+\beta})$ which together with $\E\left[|S_{n,m}|^2\right]\leq C m$ proves (\ref{assump}), since $\beta<1/4$. We can therefore conclude that $(n h_n)^{-1}\sum_{k=1}^nD_{n,k}\to 0$, $\p$-almost surely, which ends the proof.
\end{proof}

\bibliographystyle{ims}
\bibliography{biblio}

\end{document}